\newtheorem{tw}{Theorem}  
\newtheorem{lem}[tw]{Lemma}  
\newtheorem{cnj}[tw]{Conjecture}  
\newtheorem{cor}[tw]{Corollary}  
\newtheorem{rem}[tw]{Remark}
\begin{document}
\hyphenation{every}
\title{Local Irregularity Conjecture for 2-multigraphs versus cacti}
\author{Igor Grzelec\thanks{AGH University of Science and Technology, al. A. Mickiewicza 30, $30-059$  Krak\'ow, Poland, grzelec@agh.edu.pl}, Mariusz Woźniak\thanks{AGH University of Science and Technology, al. A. Mickiewicza 30, $30-059$  Krak\'ow, Poland, mwozniak@agh.edu.pl Research project partially supported by program "Excellence initiative-research university" for the AGH University of Science and Technology.}}
\maketitle
\begin{abstract}
A multigraph is locally irregular if the degrees of the end-vertices of every multiedge are distinct.  The \textit{locally irregular coloring} is an edge coloring of a multigraph $G$ such that every color induces a locally irregular submultigraph of $G$. A \textit{locally irregular colorable} multigraph $G$ is any multigraph which admits a locally irregular coloring. We denote by ${\rm lir}(G)$ the \textit{locally irregular chromatic index} of a multigraph $G$, which is the smallest number of colors required in the locally irregular coloring of the locally irregular colorable multigraph $G$. In case of graphs the definitions are similar. The Local Irregularity Conjecture for 2-multigraphs claims that for every connected graph $G$, which is not isomorphic to $K_2$, multigraph $^2G$ obtained from $G$ by doubling each edge satisfies ${\rm lir}(^2G)\leq 2$. We show this conjecture for cacti. This class of graphs is important for the Local Irregularity Conjecture for 2-multigraphs and the Local Irregularity Conjecture which claims that every locally irregular colorable graph $G$ satisfies ${\rm lir}(G)\leq 3$. At the beginning it has been observed that all not locally irregular colorable graphs are cacti. Recently it has been proved that there is only one cactus which requires 4 colors for a locally irregular coloring and therefore  the Local Irregularity Conjecture was disproved.
\\

\textit{Keywords:} locally irregular coloring; decomposable; cactus graphs; 2-multigraphs.
\end{abstract}
\section{Introduction}
All graphs and multigraphs considered in this paper are finite. The main interest of this paper are edge colorings of a multigraph. Let $G=(V, E)$ be a graph. We say that a graph is \textit{locally irregular} if the degrees of the two end-vertices of every edge are distinct. A \textit{locally irregular coloring} of a graph $G$ is an edge coloring of $G$ such that every color induces a locally irregular subgraph of $G$. We denote by ${\rm lir}(G)$ the \textit{locally irregular chromatic index} of a graph $G$ which is the smallest number $k$ such that there exists a locally irregular coloring of $G$ with $k$ colors. This problem is closely related to the well known 1-2-3 Conjecture proposed by Karoński, Łuczak and Thomason in \cite{Karonski Luczak Thomason}.

Note that not every graph has a locally irregular coloring. We define the family $\mathfrak{T}$ recursively in the following way:
\begin{itemize}
  \item the triangle $K_3$ belongs to $\mathfrak{T}$,
  \item if $G$ is a graph from $\mathfrak{T}$, then any graph $G'$ obtained from $G$ by identifying a vertex $v\in V(G)$ of degree 2, which belongs to a triangle in $G$, with an end vertex of a path of even length or with an end vertex of a path of odd length such that the other end vertex of that path is identified with a vertex of a new triangle.
\end{itemize}
The family $\mathfrak{T'}$ consists of the family $\mathfrak{T}$, all odd length paths and all odd length cycles. In \cite{Baudon Bensmail Przybylo Wozniak} Baudon, Bensmail, Przybyło and Woźniak proved that only the graphs from the family $\mathfrak{T'}$ are not locally irregular colorable (do not admit locally irregular coloring). They also proposed the Local Irregularity Conjecture which says that every connected graph $G \notin \mathfrak{T'}$ satisfies ${\rm lir}(G)\leq 3$. 

However, Sedlar and \v Skrekovski in \cite{Sedlar Skrekovski} showed that the bow-tie graph $B$ which is presented in Figure \ref{bow-tie graph} do not have locally irregular coloring with three colors.
\begin{figure}[h!]
\centering
\includegraphics[width=4.5cm]{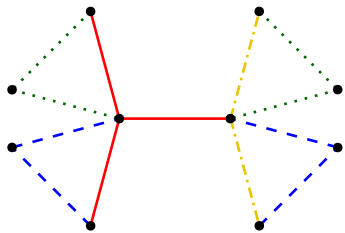}
\caption{The bow-tie graph $B$ and its locally irregular coloring with four colors.}
\label{bow-tie graph}
\end{figure}
They also proved in \cite{Sedlar Skrekovski 2} that every locally irregular colorable cactus $G \neq B$ satisfies ${\rm lir}(G)\leq 3$. By a \textit{cactus} we mean a graph in which no two cycles intersect in more than one vertex. Furthermore, they proposed the following new version of the Local Irregularity Conjecture.

\begin{cnj}[\cite{Sedlar Skrekovski 2}]
\label{graph3}
Every connected graph $G \notin \mathfrak{T'}$ except for the bow-tie graph $B$ satisfies ${\rm lir}(G)\leq 3$.
\end{cnj}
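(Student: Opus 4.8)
Since Conjecture~\ref{graph3} is still open in general, what follows is the natural strategy by which one would attack it, building on the fact recalled above that every graph which is not locally irregular colorable is already a cactus and that the unique graph with ${\rm lir}=4$ is a cactus, so that cacti form the extremal class and the remaining work concerns \emph{non}-cactus graphs. Fix a connected $G\notin\mathfrak{T'}$ with $G\neq B$. If $G$ is a cactus we are done by the theorem of Sedlar and \v{S}krekovski recalled above; otherwise $G$ has two cycles sharing an edge, equivalently at least one block of $G$ is $2$-connected and is not a cycle. The plan is then a block-by-block induction along the block tree, with two main ingredients.

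The first and, I expect, technically heaviest ingredient is a \emph{rooted} coloring lemma: for every $2$-connected graph $H$ that is not an odd cycle and every vertex $v\in V(H)$, there is a locally irregular $3$-edge-coloring of $H$ in which the monochromatic degrees of $v$ may be prescribed to lie in a small, explicitly described set of admissible values. I would prove this by fixing an open ear decomposition of $H$ starting from a cycle through $v$, coloring the initial cycle locally irregularly with at most three colors, and attaching the ears one at a time; each new ear is a path whose color --- together, if necessary, with a recoloring of a bounded neighborhood of its two endpoints --- is chosen so that along every monochromatic edge the two monochromatic degrees remain distinct. The point, and the reason cacti are extremal for this whole circle of questions, is that a $2$-connected non-cycle block carries enough surplus edges that such local adjustments are always available, which fails for cycles; the tight degree bookkeeping developed here for the doubled multigraphs $^2G$ is a useful template for this step.

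The second ingredient is the gluing. Order the blocks $B_1,\dots,B_t$ of $G$ so that each $B_i$ with $i\ge 2$ meets $B_1\cup\dots\cup B_{i-1}$ in a single cut vertex $v_i$. Having locally irregularly $3$-colored $B_1\cup\dots\cup B_{i-1}$, one reads off the monochromatic degrees of $v_i$ and extends to $B_i$: apply the rooted lemma when $B_i$ is $2$-connected and not a cycle, and reuse the path/cycle analysis of Sedlar and \v{S}krekovski essentially verbatim when $B_i$ is a bridge or a cycle, since that is exactly the cactus situation. When several blocks at a cut vertex are rigid pieces --- triangles or short odd cycles --- the admissible degrees at $v_i$ are pinned from more than one side and one must occasionally back up and recolor $B_{i-1}$ (or further) to create room; the bow-tie $B$ is precisely the configuration, two triangles at a common cut vertex, in which no such back-up is possible, which is why it must be excluded. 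A separate finite check must then show that $B$ is the \emph{only} obstruction, i.e.\ that attaching any further triangle or odd cycle around a cut vertex still leaves at least one degree of freedom; this is naturally organized as a potential argument tracking the ``degree freedom'' remaining at each cut vertex as blocks are added.

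The main obstacle is exactly this interface at cut vertices incident only with rigid blocks: there the monochromatic degree of the cut vertex is over-determined, the naive extension fails, and one is forced into the back-up mechanism, whose termination and correctness rest on the delicate potential estimate together with the verification that the bow-tie is the unique place where the potential vanishes. A secondary difficulty is making the rooted $3$-coloring lemma uniform enough over all $2$-connected non-cycle graphs to compose cleanly with the known cactus colorings; most of the case analysis will live there, and --- as in the present paper --- the $2$-multigraph variant is the cleanest setting in which to develop and test the required degree-control machinery.
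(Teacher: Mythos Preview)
This statement is presented in the paper as an \emph{open conjecture} (Conjecture~\ref{graph3}), attributed to Sedlar and \v{S}krekovski; the paper contains no proof of it. The paper's own contribution is to a \emph{different} statement --- the Local Irregularity Conjecture for $2$-multigraphs (Conjecture~\ref{main}) --- and only in the special case of cacti. So there is no proof in the paper against which your attempt can be compared.

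Your write-up is, as you acknowledge in the first sentence, not a proof but a strategy. The reduction of the cactus case to the Sedlar--\v{S}krekovski theorem is correct. The genuine gap is the ``rooted coloring lemma'': that every $2$-connected non-cycle graph $H$ admits a locally irregular $3$-edge-coloring in which the monochromatic degrees at a prescribed vertex lie in a small explicit set. This lemma is not proved, and it is essentially the entire conjecture. The best general upper bound currently known for ${\rm lir}(G)$ is $220$ (Lu\v{z}ar--Przyby\l{}o--Sot\'ak), and even without any rooted constraint no one can obtain $3$ for an arbitrary $2$-connected graph. The ear-decomposition sketch does not address the real difficulty: attaching an ear and ``recoloring a bounded neighborhood of its two endpoints'' is precisely where the obstruction lives, since there is no a~priori reason the monochromatic degree conflicts created at the attachment points can be repaired locally without propagating, and the ``surplus edges'' of a non-cycle block do not by themselves supply such a guarantee. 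The proposal is therefore a plausible outline of the \emph{shape} a proof might take, but its key lemma is unproved and, on current knowledge, out of reach; it should be read as a reformulation of the open problem rather than as progress toward resolving it.
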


This conjecture was proved for some graph classes for example trees \cite{Baudon Bensmail}, graphs with minimum degree at least $10^{10}$ \cite{Przybylo}, $r$-regular graphs where \linebreak $r\geq 10^7$ \cite{Baudon Bensmail Przybylo Wozniak}, decomposable split graphs \cite{Lintzmayer Mota Sambinelli} and decomposable claw-free graphs with maximum degree 3 \cite{Luzar Macekova}. For planar graphs it is known that ${\rm lir}(G)\leq 15$ \cite{Bensmail Dross Nisse}. For general connected graphs Bensmail, Merker and Thomassen \cite{Bensmail Merker Thomassen} proved that ${\rm lir}(G)\leq 328$ if $G\notin \mathfrak{T'}$.  Later the constant upper bound was lowered to ${\rm lir}(G)\leq 220$ by Lu\v zar, Przybyło and Soták \cite{Luzar Przybylo Sotak}.

Before we present the Local Irregularity Conjecture for 2-multigraphs we present more definitions. By 2-\textit{multigraph}, denoted by $^2G$, we mean the multigraph obtained from a graph $G$ by doubling each edge. We call an \textit{edge multiplicity} the number of single edges forming a multiedge $e$ in a multigraph $G$ and we denote it by $\mu_G(e)$. Multigraph $H$ is a \textit{submultigraph} of a multigraph $G$ if $H$ is a subgraph of $G$ and for each multiedge $e$ of $H$, $\mu_H(e) \leq \mu_G(e)$ holds. Analogically, multigraph $H$ is an \textit{induced submultigraph} of $G$ if $H$ is an induced subgraph of $G$ and for each multiedge $e$ of $H$, $\mu_H(e)= \mu_G(e)$ holds. We denote by $\hat d(v)$ degree of the vertex $v$ in a multigraph (the number of single edges incident with the vertex $v$). The \textit{locally irregular coloring} of a multigraph $G$ is an edge coloring of $G$ such that every color induces a locally irregular submultigraph of $G$. We say that multigraphs $\hat G_1$ and $\hat G_2$ create a \textit{decomposition} of a multigraph $\hat G$ if for each edge $e$ of $G$, $\mu_{\hat G_1}(e)+ \mu_{\hat G_2}(e)= \mu_{\hat G}(e)$ holds. We say that a multigraph is \textit{locally irregular colorable} if it satisfies the locally irregular coloring. By the \textit{locally irregular chromatic index} of a locally irregular colorable multigraph $G$, denoted by ${\rm lir}(G)$, we mean the smallest number of colors required in a locally irregular coloring of $G$. We will say that a multiedge is colored red-blue if one element of the multiedge is red and the second is blue. Grzelec and Woźniak proposed in \cite{Grzelec wozniak} the following Conjecture for 2-multigraphs.
\begin{cnj}[Local Irregularity Conjecture for 2-multigraphs \cite{Grzelec wozniak}]
\label{main}
For every connected graph $G$ which is not isomorphic to $K_2$ we have ${\rm lir}(^2G)\leq 2$.
\end{cnj}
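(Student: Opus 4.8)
The plan is first to recast the problem as a labelling problem on $G$ itself. A locally irregular coloring of $^2G$ with the two colors red and blue is nothing but a labelling $\phi\colon E(G)\to\{RR,BB,RB\}$, where a multiedge is labelled $RR$, $BB$ or $RB$ according to the colors of its two copies. Writing $d(v)$ for the degree of $v$ in $G$, the induced red and blue degrees in $^2G$ are
\[
r(v)=2\,|\{e\ni v:\phi(e)=RR\}|+|\{e\ni v:\phi(e)=RB\}|,\qquad b(v)=2d(v)-r(v),
\]
and the coloring is locally irregular exactly when $r(u)\neq r(v)$ for every edge $uv$ with $\phi(uv)\in\{RR,RB\}$ and $b(u)\neq b(v)$ for every edge $uv$ with $\phi(uv)\in\{BB,RB\}$. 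So the conjecture is equivalent to asserting that the edges of every connected $G\not\cong K_2$ admit such a three-valued labelling. Labelling every edge $RB$ works precisely when $G$ is already locally irregular, so the labels $RR$ and $BB$ are exactly the tools for breaking ties between adjacent vertices of equal degree.

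\textbf{Reduction to the $2$-connected case.} Second, I would use the block--cut-tree of $G$: root it and process the blocks from the leaves towards the root. Bridges and the tree-like parts are handled directly, since doubled paths and doubled stars admit easy alternating labellings, and the whole difficulty is then concentrated in the $2$-connected blocks, where the red and blue degrees at a cut vertex must be reconciled between the neighbouring blocks that share it. For a single $2$-connected block I would fix an open ear decomposition $G=C_0\cup P_1\cup\dots\cup P_k$, start from a labelling of the doubled cycle $^2C_0$ (which is easily produced by hand), and then attach the doubled ears one at a time.

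\textbf{The inductive step: attaching a doubled ear.} Let $P_i$ be a path added between two already-labelled vertices $x$ and $y$. Adding $P_i$ raises $d(x)$ and $d(y)$ by one, hence raises the red and blue degrees at $x$ and $y$ by an amount depending on the labels chosen for the two end-edges of the ear, while every internal vertex of $P_i$ is new and can be made locally irregular by labelling the ear so that consecutive internal vertices receive distinct red degrees and distinct blue degrees, which is always achievable along a path given the three available labels. The remaining task is to choose the labels of the two end-edges, and if necessary to re-label the boundedly many edges already incident to $x$ and $y$, so that the updated values $r(x),b(x),r(y),b(y)$ differ from the corresponding degrees of all previously-labelled neighbours of $x$ and $y$. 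Because each incident edge admits three labels, every vertex carries a whole range of admissible degree values rather than a single forced one, and this slack is the resource I would exploit to absorb the perturbations.

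\textbf{The main obstacle.} I expect the hard part to be exactly this global reconciliation of degrees at vertices of large degree and at cut or attachment vertices: a single change of a label shifts both $r$ and $b$ at both endpoints, so local repairs can cascade, and for dense or near-regular blocks many mutually adjacent vertices begin with equal degrees, leaving little room to separate them using only three labels per edge. Controlling this propagation uniformly over \emph{all} $2$-connected graphs — rather than over the restricted tree-of-cycles structure of a cactus, where ears reduce to single edges or paths and cut vertices are sparse — is precisely the step that does not yet go through in general. This is why the present paper establishes the statement for cacti while the full conjecture remains open.
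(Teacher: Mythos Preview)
The statement you are attempting to prove is a \emph{conjecture}; the paper does not prove it in general, and your own final paragraph acknowledges that your argument does not go through either. What the paper actually proves is the special case of cacti (Theorem~\ref{cactus}), so there is no ``paper's own proof'' of the full statement to compare against. As a proof of the conjecture your proposal has a genuine gap, and you have correctly located it: the ear-attachment step requires, at each attachment vertex $x$, adjusting the red/blue degree of $x$ to avoid the values already taken by all previously labelled neighbours of $x$, while simultaneously keeping the internal vertices of the ear conflict-free and not disturbing earlier choices at the other endpoints of the edges you re-label. You assert that the three labels per edge give enough ``slack'', but you give no mechanism to prevent the cascades you mention, and no invariant that survives the inductive step. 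Without that, the sketch is a plan, not a proof.

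Even restricted to cacti your approach and the paper's diverge substantially. For a cactus the $2$-connected blocks are single cycles, so your ear decomposition is vacuous and all the work, as you anticipate, lands at the cut vertices. The paper does not treat this with a generic block--cut-tree induction plus local repairs. Instead it fixes explicit standard colorings of every doubled cycle $^2C_n$, proves a tree lemma (Lemma~\ref{l2}) giving a red-blue coloring of any doubled rooted tree with the only possible conflict at the root, and then introduces a bookkeeping system of nine vertex labels ($A$, $B$, $S_1$, $S_2$, $\tilde S_2$, $S$, $S'$, $T$, $T'$) that records the parity and local color pattern at each cyclic vertex. The construction grows the coloring outward from a longest cycle, and for every label (or label-pair) there is a hand-crafted recipe, often with special recolorings and several figures of exceptional cases, prescribing exactly how to glue on the next cycle or tree so that parities stay separated. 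This buys a complete proof for cacti at the cost of a long case analysis; your approach is more conceptual but, as you say, does not currently yield a proof even in that restricted setting, because the ``reconciliation at cut vertices'' step is precisely where the paper's nine-label machinery is doing the work you have not supplied.
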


They also proved that for general connected graphs ${\rm lir}(^2G)\leq 76$ if $G$ is not isomorphic to $K_2$. Moreover, they proved the following theorem concerning the above conjecture.

\begin{tw}[\cite{Grzelec wozniak}]
\label{cycle}
The Local Irregularity Conjecture for $2$-multigraphs holds for: paths, cycles, wheels $W_n$, complete graphs $K_n$, for $n \geq 3$, complete \linebreak k-partite graphs and bipartite graphs. $\quad \quad \quad \quad \quad \quad \quad \quad \quad \quad \quad \quad \quad \quad \quad \quad \Box$
\end{tw}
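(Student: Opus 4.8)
The plan is to read the theorem as six separate assertions and to prove each by \emph{exhibiting a decomposition} of $^2G$ into two locally irregular submultigraphs, that is, by choosing for every edge $e$ of $G$ how many of its two copies receive the first color. I first record the reduction that underlies every case. Writing $r_e\in\{0,1,2\}$ for the number of red copies of $e$, and $d_R(v)=\sum_{e\ni v}r_e$, $d_B(v)=2d_G(v)-d_R(v)$ for the resulting color degrees, one has the identity $d_R(v)\equiv d_B(v)\pmod 2$ at every vertex, so each vertex acquires a well-defined \emph{parity type}. A clean sufficient condition for the coloring to be locally irregular is that every edge of $G$ joins a vertex of even type to one of odd type: then even-versus-odd already forces distinct degrees in \emph{both} color classes simultaneously. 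Since the type of $v$ equals $d_S(v)\bmod 2$ for the subgraph $S=\{e:r_e\ \text{odd}\}$, the whole condition is controlled by $S$.

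For a bipartite graph $G=(A\cup B,E)$ this turns the problem into a parity-subgraph (T-join) question: I seek $S\subseteq E$ whose set of odd-degree vertices is exactly one side of the bipartition, since then setting $r_e=1$ on $S$ and $r_e\in\{0,2\}$ off $S$ realizes the even/odd split. By the classical existence criterion such an $S$ exists whenever that side meets every component in an even number of vertices, which can be secured by choosing $A$ or $B$ as long as at least one side has even cardinality. This settles bipartite graphs, and in particular complete bipartite graphs, in one stroke. The residual subcase, in which both $|A|$ and $|B|$ are odd, provably escapes the pure parity split (the degree sums on the two sides would have opposite parities), and I will treat it by relaxing the type condition at a single extremal vertex $a_0$ and then restoring distinctness on the finitely many edges incident to $a_0$ by tuning the off-$S$ multiplicities $r_e\in\{0,2\}$.

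For the non-bipartite and highly symmetric families the parity coloring cannot separate every edge, so I build the decompositions directly, exploiting near-regularity to engineer distinct degrees across each monochromatic multiedge. Paths and cycles are handled by short periodic patterns of the $r_e$ (with even cycles also falling under the bipartite analysis). The model computation is $^2K_n$: here the key observation is that it suffices to choose the $r_e$ so that the red degree sequence is \emph{injective}, because then $d_B(v)=2(n-1)-d_R(v)$ is injective as well, and both color classes become locally irregular automatically; this is produced by a rotational/triangular assignment of the $r_e$, and the same template, carried by the present edges between the parts, covers complete $k$-partite graphs. For wheels $W_n$ I color the spokes first to break the uniform rim degrees at the hub and then finish the rim as a (possibly odd) cycle. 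The small parameters ($K_3$, $C_3$, $W_3=K_4$, and the shortest paths and cycles) are checked by hand.

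The step I expect to be the main obstacle is exactly the non-bipartite symmetric case, above all $^2K_n$ and the complete $k$-partite graphs: there almost all vertices share a single degree in $G$, so the construction must guarantee that \emph{every} monochromatic multiedge sees two deliberately unequal color degrees, and verifying local irregularity reduces to showing that the engineered (ideally injective) degree sequences never collide on an edge in either color, which requires a degree-realizability argument under the multiplicity bound $r_e\le 2$. The parity-obstructed both-odd bipartite subcase is a secondary difficulty of the same character, demanding an explicit local correction in place of the global parity argument.
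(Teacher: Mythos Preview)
The paper does not prove this theorem here: it is quoted from \cite{Grzelec wozniak} and closed with a $\Box$, and only the explicit colorings of $^2P_n$ and $^2C_n$ are \emph{recalled} (the blocks-of-two pattern for multipaths, and the templates of Figure~\ref{fCycles1} for multicycles), because those particular colorings feed into the proof of Theorem~\ref{cactus}. So for wheels, $K_n$, complete $k$-partite graphs and general bipartite graphs there is no in-paper argument to compare your plan against; for paths and cycles your ``short periodic patterns'' match the spirit of what the paper actually uses.

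On the plan itself, the parity/$T$-join reduction for bipartite $G$ is clean and correct when one side has even order. The both-odd subcase, however, is not yet an argument: you propose to relax the type at one vertex $a_0$ and then ``tune'' $r_e\in\{0,2\}$ near $a_0$, but changing $r_{a_0b}$ shifts $d_R(a_0)$ and $d_R(b)$ by the \emph{same} amount, so a conflict on a monochromatic edge $a_0b$ cannot be broken by that edge alone, and you give no scheme for which other edges to adjust or why the resulting inequalities at all neighbours of $a_0$ are simultaneously solvable. Likewise, for $^2K_n$ and complete $k$-partite graphs your reduction to an injective red-degree sequence is sound, but you yourself flag the realizability step under $r_e\le 2$ as the crux and do not supply the promised ``rotational/triangular'' construction; note that $r_e\in\{0,2\}$ alone cannot suffice (that would ask for a simple graph on $n$ vertices with pairwise distinct degrees), so the construction must genuinely mix multiplicities, and the $k$-partite case needs its own verification since vertices in different parts can share the same $G$-degree. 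These two places are where the outline still owes an explicit construction rather than a plan.
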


Now, we recall the locally irregular coloring of multipaths and multicycles from the proof of above theorem given by Grzelec and Woźniak in \cite{Grzelec wozniak} which will be useful throughout the proof of our main result. We will denote by $P_n$ a path with $n$ vertices.

For a multipath $^2P_n$ of even length we color the first two multiedges blue, the next two multiedges red and we repeat this color sequence until the end of the multipath. We do not consider the multipath of length one. For a longer multipath of odd length we color the first multiedge blue, the second red-blue, the third red and then we color the remaining multiedges in the same way as multipath of even length.

\begin{figure}[h!]
\centering
\includegraphics[width=1\textwidth]{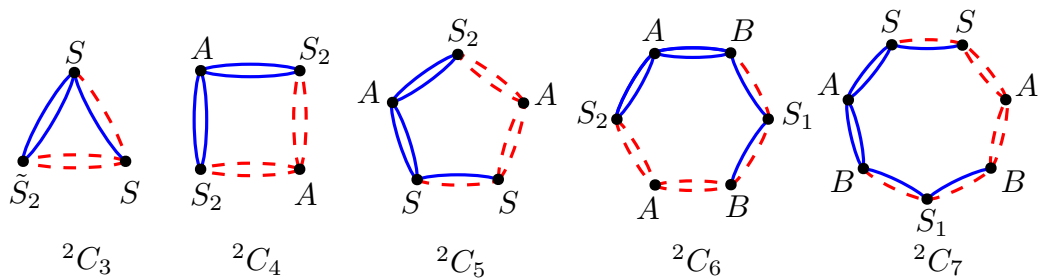}
\caption{A locally irregular coloring of the multicycles. The meaning of vertex labels will be explained in Theorem \ref{cactus}.}
\label{fCycles1}
\end{figure}

We color multicycles of length from three to seven as in Figure \ref{fCycles1}. The coloring of longer multicycle we obtain by adding multipath of length divisible by four colored in the same way as above to the appropriate colored multicycle of length from four to seven after two red multiedges.

In this paper we will show Conjecture \ref{main} holds for cacti. The mentioned results for graphs so far indicate that cacti are important class of graphs for the problem of locally irregular coloring, because the not locally irregular colorable graphs (the family $\mathfrak{T}$) are cacti and the only known counterexample for the Local Irregularity Conjecture is also a cactus. This motivated us to check the Local Irregularity Conjecture for 2-multigraphs for cacti.

\section{Main result}
Before we prove Conjecture \ref{main} for cacti we need to introduce the notation and lemma for trees which will be useful through the proof. First, we will denote by $T(r)$ a tree rooted at a vertex $r$. A \textit{shrub} is any tree rooted at a leaf. The only edge in a shrub $G$ incident to the root we will call the  \textit{root edge} of $G$.  We will call \textit{branch} rooted at vertex $x$, denoted by $B(x)$, the following subgraph in rooted tree $T$. If $x$ has only one son $x_0$ in $T$ a branch $B(x)$ includes: edge $xx_0$, edges from $x_0$ to all its sons $x_1, \dots, x_k$ and if $x_j$ has only one neighbor $x'_j$ which is a leaf in $T$  edge  $x_jx'_j$ for $j=1, \dots, k$. If $x$ has more than one son in $T$ a branch $B(x)$ includes: edges from $x$ to all its sons $x_1, \dots, x_k$ and if $x_j$ has only one neighbor $x'_j$ which is a leaf in $T$  edge  $x_jx'_j$  for $j=1, \dots, k$. We can easily see that any branch contain at least two edges. 
\begin{rem}
There are only three branches: path $P_4$ rooted at an internal vertex, path $P_4$ rooted at an initial vertex and path $P_5$ rooted at a central vertex, which are not locally irregular.
\end{rem}

We will use corresponding notation for 2-multigraphs.

\begin{lem}
\label{l2}
For each tree $T$ rooted at the vertex $r$ there exists a decomposition of $^2T$ generated by a red-blue almost locally irregular coloring where the only possible conflict may occur between $r$ and one or two of its neighbours. 
\end{lem}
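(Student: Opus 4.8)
The plan is to argue by strong induction on $|V(T)|$. The base cases are the trees that cannot be reduced: $K_2$ (colour its doubled edge with one blue and one red copy, so that the unavoidable conflict lies precisely on $r$ and its only neighbour), the paths (put $r$ at a convenient vertex and invoke the colourings of $^2P_n$ recalled after Theorem~\ref{cycle}, which for $n\ge 3$ in fact leave no conflict at all), and the trees which are a single branch $B(r)$ — a broom — where all vertices except $r$ can be made locally irregular by a direct leg-by-leg colouring (length-one legs at $r$ carry no constraint since the offending edge meets $r$; a length-two leg is coloured so that its midpoint gets high blue degree and its endpoint low, and symmetrically for $B(r)$ of the form $r$--$x_0$--broom). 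Each base case is settled by writing the colouring down explicitly.

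For the inductive step, suppose $T$ is not a base case. Then $T$ has a \emph{lowest} branch $B(x)$, i.e.\ a branch whose root $x$ differs from $r$ and which coincides with the subtree of $T$ rooted at $x$. Let $p$ be the parent of $x$ and put $T'=T-(V(B(x))\setminus\{x\})$. In $B(x)$ the vertex $x$ meets only the doubled edges to its sons, so after deleting $B(x)-x$ the vertex $x$ retains only its parent edge and becomes a leaf of the strictly smaller rooted tree $T'$. Apply the induction hypothesis to $^2T'$, obtaining a red--blue almost locally irregular colouring with conflicts only on at most two edges at $r$. If $p\ne r$ then the doubled edge $xp$ is not one of them, so, writing $(\beta,\gamma)$ with $\beta+\gamma=2$ for the numbers of blue and red copies of $xp$ in this colouring, the blue degree of $p$ is not $\beta$ and its red degree is not $\gamma$. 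It remains to extend the colouring over $^2B(x)$.

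This extension is the core of the proof. Every vertex of $B(x)$ other than $x$ has its whole degree inside $B(x)$, so only two kinds of constraints arise: those among the vertices of $B(x)$, and those between $x_0$ (respectively the sons of $x$) and the \emph{final} degree of $x$. The branch $B(x)$ has finitely many shapes — according to whether $x$ has one son or several, and to which grandchildren carry a pendant leaf — and for each shape one assembles a colouring of $^2B(x)$ from the recalled multipath colourings on the ``legs'' and from the three choices on each remaining doubled edge (both copies blue, both red, or one of each). The key freedom is that the blue amount $\beta'$ that $B(x)$ adds to $x$ can be driven to either extreme: colouring all the $x$--son edges red ($\beta'=0$) keeps the blue side of $xp$ safe, since the blue degree of $p$ is not $\beta$, and colouring them all blue keeps the red side safe; if both extremes clashed, a parity argument on $\deg_T(p)$ forces an intermediate value of $\beta'$ to work, and in all cases enough room is left on the legs to also make the internal vertices of $B(x)$ locally irregular. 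When $p=r$ there is no constraint on $xp$ at all, and since $T'$ already carried at most two conflicts at $r$ while this step consumes at most one of those slots, the bound ``one or two neighbours of $r$'' is preserved.

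I expect the main obstacle to be exactly this case analysis over the shapes of $B(x)$, and within it the degenerate branches singled out in the Remark — the paths $P_4$ and $P_5$ — where no single colour makes all internal vertices locally irregular; there one is forced to use a genuinely mixed doubled edge, and it is precisely the extra slack from doubling every edge that rescues these cases. A secondary nuisance is keeping the leg colourings consistent with the prescribed value of $\beta'$; should that bookkeeping become unwieldy, the natural remedy is to strengthen the inductive statement by recording also the type of the stub edge at the newly created leaf $x$, which supplies one more degree of freedom when that leaf later ends up inside a branch that is itself peeled off.
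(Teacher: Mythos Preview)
Your plan is a viable alternative, but it takes a considerably more laborious route than the paper. The paper does not induct at all: it decomposes $^2T(r)$ top-down into branches $^2B(r), ^2B(x_1), ^2B(x_2),\ldots$ (each $x_i$ a leaf of the previous branch that is not a leaf of $T$), and then simply colours each branch \emph{monochromatically}, alternating the colour as one moves away from $r$. The point is that, by the Remark, every branch except the three degenerate shapes ($P_4$ rooted at an end, $P_4$ rooted internally, $P_5$ rooted centrally) is already locally irregular, so a single colour suffices inside it; and because the edge $x_0x$ from a branch root to its father is always monochromatic, switching colour at $x$ automatically separates the two sides. The three exceptional shapes are patched by a fixed picture (Figure~\ref{fex1}). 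No extension argument, no control of $\beta'$, no parity count is needed.

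Your bottom-up induction can be made to work, but it forces you to re-solve at every step the very problem that the paper dissolves once and for all: you must colour $^2B(x)$ while simultaneously meeting the constraint imposed by the already-coloured edge $xp$. You correctly identify this coupling as the crux, and your $\beta'$ freedom together with a parity/counting argument does give enough room in principle, but carrying it through for all branch shapes (including the cases where $x$ has a single son and the legs have mixed lengths) is genuinely messy --- and unnecessary, since the paper's observation that the parent edge is monochromatic lets you colour $B(x)$ entirely in the \emph{other} colour and forget about $p$ altogether. Two small slips to watch: in the lemma $r$ is given, so you cannot ``put $r$ at a convenient vertex'' of a path (though this is harmless here because the recalled colourings of $^2P_n$, $n\ge 3$, are conflict-free everywhere); and your remark that the $p=r$ case ``consumes at most one of those slots'' needs care, since the inductive bound of two conflicting neighbours at $r$ must be maintained across possibly several branches rooted at sons of $r$.
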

\begin{proof}
The lemma holds obviously for $T=K_2$. Assume that $T\ne K_2$. First, we decompose $^2T(r)$ into branches in the following way. Let $^2B(r)$ be the first branch from our decomposition. Let $x_i$ for $i=1, \dots, p$ be a leaf in $^2B(r)$, which is not a leaf in $T$. The next branches of the decomposition are $^2B(x_1), \dots, ^2B(x_p)$. We start coloring from $^2B(r)$, next we color branches with root at a vertex from $^2B(r)$ and so on. Branch $^2B(r)$ has always one color except for the situation when $^2B(r)=$ $^2T(r)$ and $^2B(r)$ is a multipath $^2P_4$ rooted at one of its ends (exception I). In this situation we color root multiedge blue and the rest of this branch red. Let $^2B(x)$ where $x\ne r$ be a branch from the above decomposition. If $^2B(x)$ is not an exceptional branch, we color $^2B(x)$ using the color different from the color of multiedge from the vertex $x$ to its father. 

If $^2B(x)$ is an exceptional branch, we denote by $x_0$ the father of $x$. Assume that multiedge $x_0x$ is colored red. We use the coloring of exceptional branch $^2B(x)$ presented in Figure \ref{fex1}.
\begin{figure}[h!]
\centering
\includegraphics[width=12cm]{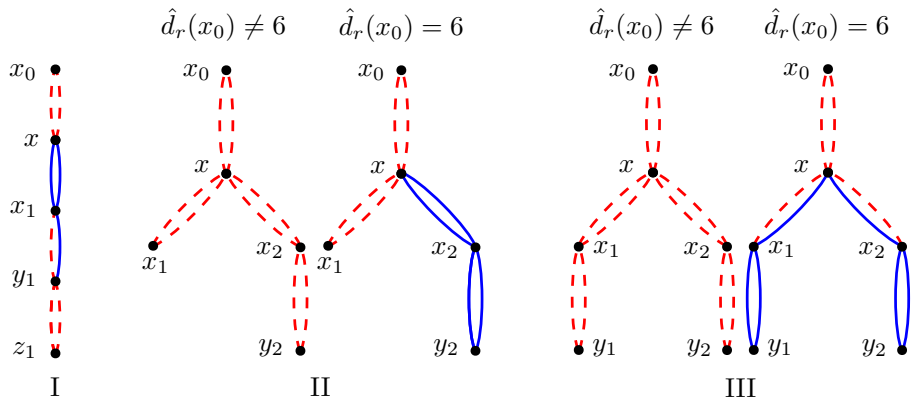}
\caption{The coloring of branch $^2B(x)$ which is an exception.}
\label{fex1}
\end{figure}
Note that multiedge $x_0x$ has always only one color.
\end{proof}

Remark that in the coloring of multitree rooted at vertex $r$ presented in the proof of above lemma, the root $r$ and its sons are always even. 

We need one more definition. In multicactus we call \textit{cyclic} all vertices on multicycles and the remaining vertices are \textit{woody}. Now we are ready to prove our main result for cacti. 

\begin{tw}
\label{cactus}
For every cactus $G$, the multigraph $^2G$ satisfies ${\rm lir}(^2G)\leq 2$.
\end{tw}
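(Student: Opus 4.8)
The plan is to argue by induction on the block structure of the cactus. Every block of a cactus is either a bridge or a cycle, so $G$ is built by gluing cycles and edges at cut vertices and, correspondingly, $^2G$ is built by gluing multicycles $^2C_n$ and doubled edges. The building blocks of the coloring are thus already available: the explicit red--blue colorings of $^2P_n$ and $^2C_n$ recalled before the statement (the latter carrying the labelled vertices of Figure \ref{fCycles1}), together with Lemma \ref{l2}, which colors the doubled tree hanging off any vertex with at most the controlled ``root conflict'' and with the root and its sons even in each color class. Since every vertex of $^2G$ has even degree, in a red--blue coloring the red degree and the blue degree of a vertex always have the same parity, and keeping track of these parities is exactly what will let colorings of neighboring blocks be merged at a shared cut vertex without introducing new conflicts.

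The core of the argument is an analogue of Lemma \ref{l2} for rooted $2$-multicacti: \emph{for every cactus $G$ rooted at a vertex $r$ there is a red--blue coloring of $^2G$ that is locally irregular except possibly for a conflict between $r$ and at most two of its neighbors, and in which $r$ is even in each color class} (and, when $r$ is woody, its sons are even as well, just as in Lemma \ref{l2}). Granting this, Theorem \ref{cactus} follows by choosing the root suitably. If $G$ has a leaf, root at it and there is no residual conflict at all. Otherwise every vertex of $G$ has degree at least $2$; if some vertex has degree at least $3$, root there, while if not then $G$ is a single cycle and we are done by Theorem \ref{cycle}. In the remaining case the bookkeeping ``$r$ and its neighbors even, $\hat d(r)\geq 4$'', together with the freedom left when the block of $r$ was colored, is used to recolor a bounded number of multiedges at $r$ and so destroy the at most two residual conflicts.

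To establish the rooted statement, recurse on $G(r)$ through a block $\mathcal B$ incident to $r$. If $\mathcal B$ is a bridge $ru$, color by induction the rooted subcactus hanging below $u$ (so that $u$ is even in each color), give $ru$ a single color, and recurse on the remainder of $G(r)$; only a conflict between $r$ and $u$ can be created, which is of the permitted type. If $\mathcal B$ is a cycle $C=rv_1\cdots v_\ell r$, color $C$ with the appropriate pattern from the recalled multicycle colorings, and then, at every $v_i$ that carries a pendant subcactus $G_i(v_i)$, superpose the coloring of $^2G_i$ supplied by induction. Because each such $v_i$ is even in each color inside $^2G_i$, superposition shifts $\hat d_r(v_i)$ and $\hat d_b(v_i)$ by even amounts only; the labelled ``special'' vertices of Figure \ref{fCycles1}, where the cycle pattern admits local modifications, are then used to guarantee that no multiedge of $C$ has its two endpoints equal in a common color. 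Any conflict that survives is confined to $r$ and its two neighbors on $C$, as required.

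The step I expect to be the main obstacle is the cycle case for short cycles --- triangles, $C_4$, $C_5$ --- carrying pendant subcacti at several of their vertices: here the multicycle coloring leaves very little slack, and one must check that the parities forced by the even roots of the pendants can be realized by \emph{some} admissible coloring of the short cycle, treating the exceptional configurations by hand (a triangle with pendants at all three vertices, the analogue of the bow-tie graph $B$, being the most delicate). A second delicate point is the final recoloring at the global root: one has to verify that ``even and of degree at least $4$'' really does leave enough room to break up to two residual conflicts, which may require a more careful choice of $r$ (for instance a vertex on a longest cycle) before the induction is run, plus a short case analysis of how the last block at $r$ interacts with the subcacti attached to $r$ itself.
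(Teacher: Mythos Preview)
Your outline is a natural bottom--up induction on the block tree, but it is not how the paper proceeds, and the invariant you carry is too weak to close the inductive step.

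The paper does \emph{not} prove a rooted analogue of Lemma~\ref{l2} for cacti. Instead it colors outward: start from a longest multicycle with one of the standard patterns, assign to each cyclic vertex one of nine labels ($A$, $B$, $S_1$, $S_2$, $\tilde S_2$, $S$, $S'$, $T$, $T'$) recording its parity and color profile, and then for every label give an explicit recipe for attaching further multicycles and a multitree (via Lemma~\ref{l2}) at that vertex. The attached pieces are colored \emph{after} and \emph{depending on} the label of the attachment point, with many ad hoc recolorings for spikes, short cycles, and the paired labels $S,S',T,T'$. In other words, the information flows from the already-colored part into the new piece, not the reverse.

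In your scheme the information flows the other way, and that is where the gap lies. When you attach a pendant subcactus $G_i$ at a cycle vertex $v_i$, the inductive hypothesis allows conflicts between $v_i$ and one or two of its $G_i$--neighbours. You then add the cycle's contribution at $v_i$ and hope this resolves those conflicts. But the cycle contribution can be $0$ in one of the two colors (for instance at a vertex labelled $A$ both incident cycle multiedges may be the same color), so a $G_i$--conflict in that color survives unchanged. Your sentence about using ``the labelled special vertices'' only addresses conflicts along cycle edges, not the root conflicts coming up from below, and with several pendants on a short cycle these constraints interact. Fixing this would require not a single coloring of $G_i(v_i)$ but a \emph{menu} of colorings with different red/blue degree profiles at $v_i$ --- which is exactly what the paper's nine labels and per-label attachment rules supply. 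Relatedly, the deduction ``root at a leaf, then no residual conflict'' is not justified: nothing in your invariant prevents the unique son of a leaf root from having color degree $2$ in the color of the root multiedge.
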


\begin{proof}
We give a construction of locally irregular coloring of $^2G$. We will use the method of vertex labeling. We treat multicactus as a multigraph obtained from a multicycle by \textit{adding} to vertices on this multicycle elements (multitrees or multicycles) another subsequent multicycles to leaves at multitrees, next subsequent multitrees and multicycles to vertices on added multicycles and so on. By adding or joining rooted multitree $^2T(r)$ to a vertex $x$ we mean identifying the vertex $x$ with the root of multitree $^2T(r)$. Similar, by adding or joining a multicycle to a vertex $x$ we mean identifying the vertex $x$ with one of the vertices on the multicycle. 

Our construction consists of three main steps and then we repeat steps two and three until we color the whole multigraph $^2G$. In each step we extend existing red-blue locally irregular coloring to added elements. Moreover, each cyclic vertex is labelled with $A$, $B$, $S_1$, $S_2$, $\tilde S_2$, $S$, $S'$, $T$ or $T'$ according to the following rules. We label by:  
\begin{itemize}
  \item $A$ each even vertex whose neighbors on each multicycle have labels from the set $\{B, S_2, S, S'\}$ and has red degree or blue degree greater than two,
  \item $B$ each odd vertex whose neighbors on each multicycle have labels from the set $\{A, S_1, T, T'\}$,
  \item $S_1$ even vertex $x$ with: $\hat d(x)=4$, $\hat d_r(x)=\hat d_b(x)=2$ and two incident multiedges colored red-blue, 
  \item $S_2$ even vertex $x$ on multicycle of length greater than three with: \linebreak $\hat d(x)=4$, $\hat d_r(x)=\hat d_b(x)=2$ and two incident multiedges colored first blue second red,
  \item $\tilde S_2$ even vertex  $x$ on $^2C_3$ with: $\hat d(x)=4$, $\hat d_r(x)=\hat d_b(x)=2$ and two incident multiedges colored first blue second red,
  \item $S$ both vertices from a special pair of odd adjacent vertices, 
  \item $S'$ both vertices from a pair of adjacent vertices which would become the special pair $S$ if we joined something to both vertices from this pair, because now this pair has only one neighbour which has label $A$ or all neighbours of this pair are labelled $A$ or $A$ and $S_1$ and pair $S'$ has form: multiedge colored blue, first vertex from pair $S'$, multiedge colored red, second vertex from pair $S'$ and multiedge colored red-blue (or symmetrically),
  \item $T$ both vertices from a special pair of even adjacent vertices on $^2C_3$ (on longer multicycle we never use this label),    
  \item $T'$ both vertices from a pair of adjacent vertices on $^2C_3$ which would become the special pair $T$ if we joined something to both vertices from this pair, because now it consist of the vertex labelled $A$ or $\tilde S_2$ and the vertex labelled $B$ (on longer multicycle we never use this label).
\end{itemize} 
\textbf{Step 1: \quad initial part of the locally irregular coloring of $\mathbf{^2G}$.} In this step we color in a standard way, except for the situation described below, the longest multicycle in $^2G$ using the same method as in the proof of Theorem \ref{cycle} (see Figure \ref{fCycles1}). During the rest of the proof we will often use this coloring of multicycle. Note that multitree $^2P_3$ rooted at its end creates particular problems. We will call such multipath a \textit{spike}. Indeed, it cannot be added neither to the vertex labelled $S_1$ nor to the vertex labelled $\tilde S_2$.  We can avoid this problem in two ways. First, trying to change colors in standard coloring of multicycle so that the vertex to which we add spike has another label. Sometimes it is impossible, for example in the situation when multicycle $^2C_3$ has added a spike to each vertex. This particular case is presented in Figure \ref{C3 with spikes}.  
\begin{figure}[h!]
\centering
\includegraphics[width=5cm]{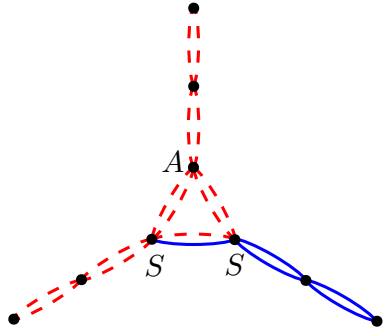}
\caption{The coloring of $^2C_3$ with spikes.}
\label{C3 with spikes}
\end{figure}
Therefore, in the situation when the initial multicycle has length $4k+2$ or $4k+3$ for $k\geq 1$ and the next step requires adding a spike we will consider the multicycle with spike instead of multicycle.  The coloring of this multicycle with spike is presented in Figure \ref{C6 with spike}. Note that in this figure we presented coloring only for multicycles of length $n=6$ and $n=7$, but we can easily extend this coloring for longer multicycles with spike.  
\begin{figure}[h!]
\centering
\includegraphics[width=9.5cm]{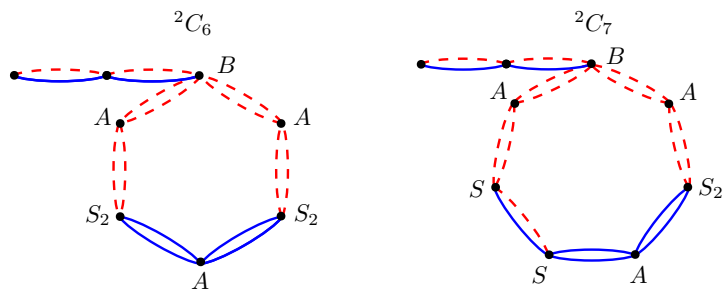}
\caption{The initial coloring of $^2C_6$ with spike and $^2C_7$ with spike.}
\label{C6 with spike}
\end{figure}

We may assume that in the next step we will never add a spike to the vertex labeled $\tilde S_2$ on $^2C_3$. If it is not true we can easily recolor standard $^2C_3$. \\
\textbf{Step 2:\quad joining all multitrees and multicycles to vertices on chosen colored multicycle in $\mathbf{^2G}$.} We choose one colored multicycle in $^2G$. First, we join all multitrees and multicycles to vertices labeled $A$, $B$, $S_1$, $S_2$ and $\tilde S_2$ on chosen multicycle. Then we consider pair of vertices labelled $S$ or $S'$ or $T$ or $T'$. When we consider a colored multicycle $^2C_5$ with pair of vertices labelled $S'$ we start from this pair to avoid potential problems with neighbors of this pair. Note that in any colored multicycle there is at most one pair of such vertices. When we consider each of these pairs, we join at the same time all multitrees and all multicycles to both vertices creating this pair. Below we present details of each part of Step 2. 

Let $\tilde G$ be already colored part of the multigraph $^2G$. This means in particular that each cyclic vertex in $\tilde G$ has its own label. Let $x$ be a cyclic vertex in $\tilde G$. In this part of Step 2 we describe how to add subsequent elements to $x$. The method of coloring and labelling them depends mostly on the label of $x$. We establish a rule that if we start coloring elements added to the vertex $x$ then we color all those elements. In the next steps we shall not add anything to $x$. We assume that multicycles which we join are colored standardly and labelled as in Figure \ref{fCycles1}.

\textbf{Joining to the vertex labelled $\mathbf{A}$.} Note that we do not change the parity of the vertex labelled $A$. Therefore, adding multicycles is very simple in this case. By a dominating color at a vertex $x$ we mean color in which the vertex $x$ has greater degree. We join to the vertex $x$ labelled $A$ multicycles:
\begin{itemize}
  \item $^2C_3$ using their vertices labelled  $\tilde S_2$,
  \item $^2C_{4k}$ and $^2C_{4k+1}$ using their vertices labelled $A$ so that we increase the degree of $x$ at dominating color,
  \item $^2C_{4k+2}$ and $^2C_{4k+3}$ using their vertices labelled $S_1$.
\end{itemize}
At the end, we join rooted multitree. So we join a rooted multitree which is not multishrub except for $^2K_2$ and $^2P_3$ to the vertex $x$ labelled $A$ starting from multiedges incident to its root colored with the dominating color at $x$ using Lemma \ref{l2}. If the rooted multitree joined to the vertex $x$ labelled $A$ is a multishrub and is not isomorphic to $^2K_2$ or $^2P_3$ we color its root multiedge with the dominating color at $x$ and the rest of this 2-shrub rooted at the vertex $r'$ we color starting from the other color using Lemma \ref{l2}. If $\hat d(r')=6$ in $^2G$ and we have pendant multipath $^2P_3$ from the vertex $r'$, we recolor this multipath red-blue. 

\textbf{Joining to the vertex labelled $\mathbf{B}$.}
Note that a vertex labelled $B$ is odd and remains odd. Therefore, adding multitrees is very simple in this case because from Lemma \ref{l2} we always have $\hat d(x)\ne \hat d(r')$ for each $r'$ which is a neighbour of $x$ in added multitree. When adding multicycle, we identify a vertex $y$ with the vertex $x$. We need to make sure that neighbours of $y$ on the multicycle have label $A$ or $T$. For multicycle  $^2C_{4k}$ or $^2C_{4k+1}$ we use standard coloring of a multicycle and the vertex $y$ with label $S_2$. For multicycle $^2C_{4k+2}$ or $^2C_{4k+3}$ except for $^2C_3$ we denote by $y$ the vertex with label $B$ and greater degree at dominating color at $x$. Then we recolor incident multiedges to the vertex labelled $S_1$ on joined multicycle on dominating color at $y$. Thus, the vertex $x$ remains odd. For a multicycle $^2C_3$ we use standard coloring of a multicycle and the vertex $y$ with label $\tilde S_2$. Then we recolor second multiedge red on joined $^2C_3$. Thus, the neighbors of $y$ on $^2C_3$ create a special pair of type $T$.

\textbf{Joining to the vertex labelled $\mathbf{S_1}$ or $\mathbf{\tilde S_2}$.}
We will never join an individual spike to the vertex labelled $S_1$ or $\tilde S_2$. We join all multicycles to the vertex $x$ labelled $S_1$ or $\tilde S_2$ using the same method as when we join to the vertex labelled $A$. Then we join multitree to the vertex $x$ using the method of joining to the vertex labelled $A$. Note that after joining all the elements the vertex $x$ gets label $A$.

\textbf{Joining to the vertex labelled $\mathbf{S_2}$.} Note that in this case we change the parity and label of the vertex $x$. If we add  multicycles then we use multicycle to change parity of the vertex $x$ and then we add remaining elements using the same method as when we add to the vertex labelled $B$. Let $y$ be the vertex labelled $B$ on the multicycle of length equal to two or three modulo four except for $^2C_3$ such that its neighbour $y_1$ is labelled $S_1$. We join this multicycle to $x$ using the vertex $y$. If we have a spike at vertex $y_1$, we replace this multicycle by a multicycle with spike. The coloring of this multicycle with spike is presented in Figure \ref{long multicycle with spike}.
\begin{figure}[h!]
\centering
\includegraphics[width=10.5cm]{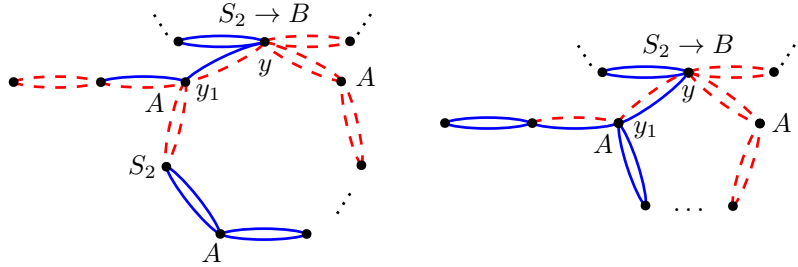}
\caption{The coloring of a multicycle $^2C_{4k+2}$ or $^2C_{4k+3}$ with spike joined to the vertex labelled $S_2$ (left) and $^2C_{4k}$ or $^2C_{4k+1}$ with spike joined to the vertex labelled $S_2$ (right)}
\label{long multicycle with spike}
\end{figure}

A multicycle of length equal to zero or one modulo four in the standard coloring either has no odd vertex or has an odd vertex with an odd neighbour. Therefore, this coloring cannot be used in this case. Thus, before we join multicycle $^2C_{4k}$ to the vertex $x$ labelled $S_2$ we recolor two multiedges incident to one of the vertices labelled $A$ with red-blue. Thus, this vertex $y_1$ gets label $S_1$ and its neighbours get label $B$. Then we join this recolored multicycle $^2C_{4k}$ to the vertex $x$ using similar method as when we join $^2C_{4k+2}$ or $^2C_{4k+3}$ to the vertex labelled $S_2$. Note that we do not have any pair of vertices labelled $S'$ or $S$ on joined multicycle $^2C_{4k}$.

Before we join multicycle $^2C_{4k+1}$ except for $^2C_5$ to $x$, we recolor two multiedges incident to one of the vertices labelled $A$ which has no neighbour from the special pair $S$ red-blue. Thus, this vertex $y_1$ gets label $S_1$ and its neighbours get label $B$. Then we join this recolored multicycle $^2C_{4k+1}$ to $x$ using the same method as when we join $^2C_{4k}$ to the vertex labelled $S_2$.

The coloring of multicycle $^2C_5$,  $^2C_5$ with spike and $^2C_3$ joined to the vertex labelled $S_2$ is presented in Figure \ref{short multicycles}. We always use presented coloring of $^2C_5$ with spike when we join $^2C_5$ to the vertex labelled $S_2$ and at least one of the neighbours of $y$ on joined $^2C_5$ has a spike, because we would like to avoid potential problems with pair $S'$. 
\begin{figure}[h!]
\centering
\includegraphics[width=\textwidth]{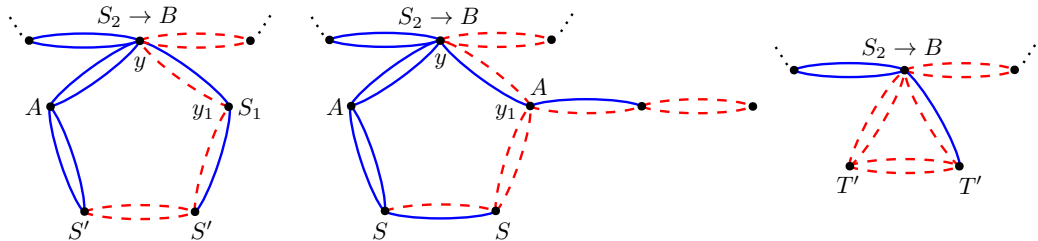}
\caption{The coloring of multicycle $^2C_5$, $^2C_5$ with spike and $^2C_3$  joined to the vertex labelled $S_2$ on multicycle.}
\label{short multicycles}
\end{figure}

Now, we introduce the method of joining the only multitree $^2T$ rooted at $r$ to the vertex $x$ labelled $S_2$. We consider two cases. Assume that red is the dominating color at the vertex $x$. \\
\textbf{Case 1:}\quad $\hat d(r)=2$ in $^2T$. We color root multiedge $rx_0$ red-blue. If $\hat d(x_0)>4$, we color multitree rooted at vertex $x_0$ using Lemma \ref{l2} so that blue is the dominating color at the vertex $x_0$. If $\hat d(x_0)=4$, we color multiedge $x_0x_1$ red-blue and then we color multitree rooted at the vertex $x_1$ using Lemma \ref{l2} so that blue is the dominating color at the vertex $x_1$.  \\
\textbf{Case 2:}\quad $\hat d(r)>2$ in $^2T$. We treat this rooted multitree as the set of multishrubs with common root. We choose one of those multishrubs and color it in the same way as in the first case. Then we color the remaining part of multitree using the same method as when we join it to the vertex labelled $B$ so that red is the dominating color at $x$.

Now, we present in details method of joining elements to pairs of vertices labelled $T$, $T'$, $S'$ and $S$ on the multicycle. Recall that pair of vertices labelled $T$ or $T'$ will appear only on the multicycle $^2C_3$. Below in Figure \ref{pairs} we give the form of each pair but it may happen that we will have pairs with symmetrical coloring and then we treat those pairs analogously.  
\begin{figure}[h!]
\centering
\includegraphics[width=12cm]{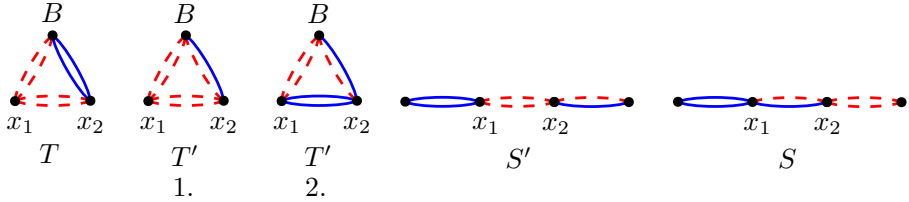}
\caption{The form of pair vertices labelled $T$, $T'$, $S'$ and $S$.}
\label{pairs}
\end{figure}

\textbf{Pair T.}\quad When we join to only one vertex from the special pair $T$ we choose the vertex $x_1$ and we use the method of joining to the vertex labeled $A$. When we join $^2K_2$ to one vertex and something  to the second vertex from par $T$ we join blue $^2K_2$ to $x_2$ and remaining elements to $x_1$ using the method of joining to the vertex labeled $A$. The particular coloring of special pair $T$ with added a spike to each vertex from this pair is presented in Figure \ref{2-spike}.
\begin{figure}[h!]
\centering
\includegraphics[width=4.5cm]{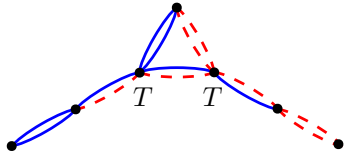}
\caption{The particular coloring of special pair $T$ with spikes.}
\label{2-spike}
\end{figure}

We consider the situation when we will join something (except for a spike) to one vertex from pair $T$ and a spike to the second vertex from pair $T$. We join a spike colored red to the vertex $x_1$ and then we join elements to the vertex $x_2$ starting from multicycles and then possibly multitree using the same method as when we join to the vertex labelled $A$ and so that blue is the dominating color at $x_2$. If we have a conflict ($\hat d_r(x_1)= \hat d_r(x_2)=6$), we recolor a spike joined to the vertex $x_1$ blue. 

In the remaining situation we join starting from multicycles then we join multitrees to vertices $x_1$ and $x_2$ using the same method as when we join to the vertex labelled $A$ so that red is the dominating color at $x_1$ and blue at $x_2$. If we have conflict between $x_1$ and $x_2$ without recoloring, we move all joins from $x_1$ to $x_2$ and from $x_2$ to $x_1$. Thus, we are done with special pair $T$.  

\textbf{Pair T'.}\quad  First, we present the general method of joining multitree $^2T$ rooted at $r$ to the vertex $x_2$. We consider two cases.\\
\textbf{Case 1:}\quad $\hat d(r)=2$ in $^2T$. We color the root multiedge $rx_0$ red-blue. Then we color the multitree rooted at the vertex $x_0$ using Lemma \ref{l2}. \\
\textbf{Case 2:}\quad $\hat d(r)\geq4$ in $^2T$. We treat the multitree rooted at $r$ as a set of multishrubs with common root. We choose one multishrub rooted at $r$ and color it in the same way as in Case 1. Then we join the remaining part of $^2T$ to the vertex $x_2$ using the method of joining to the vertex labelled $A$ starting from the root multiedge in the dominating color at $x_2$ if $\hat d(r)=4$ in $^2T$ and starting from the other color of multiedges incident to $r$ in the remaining part of $^2T$ if $\hat d(r)>4$ in $^2T$.

Now, we present in detail method of joining elements to the pair $T'$ in the first form. When we join to only one vertex from pair $T'$, we choose the vertex $x_1$ and we use the method of joining to the vertex labelled $A$.

When we join only a multitree to each vertex from pair $T'$, we join the first multitree to $x_1$ using the same method as when we join to the vertex labelled $A$ and the second multitree to the vertex $x_2$ using the general method of joining multitree rooted at $r$ to the vertex $x_2$ so that blue is the dominating color at $x_2$ if we join multitree with root of degree equal at least six. If we join multitree with root of degree equal to two and multitree with root of degree equal to four we join multitree with root of degree equal to two to the vertex $x_2$ and multitree with root of degree equal to four to the vertex $x_1$ to avoid conflict.

In the remaining case we first join exactly one multicycle to the vertex $x_2$ using the coloring presented in Figure \ref{short multicycles2}. The coloring of longer multicycle we obtain by adding multipath of length divisible by four colored standardly (first two multiedges blue, next two multiedges red, and so on) to the appropriate colored multicycle of length from four to seven after two red multiedges.
\begin{figure}[h!]
\centering
\includegraphics[width=\textwidth]{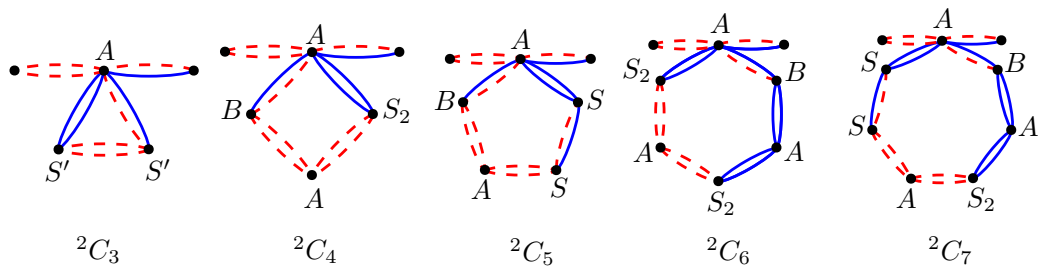}
\caption{The coloring of multicycle joined at first to the vertex $x_2$ from pair $T'$.}
\label{short multicycles2}
\end{figure}
Then we continue joining multicycles and possibly multitree to the vertex $x_2$ using the same method as when we join to the vertex labelled $A$ so that blue is the dominating color at $x_2$. Next, we join elements to the vertex $x_1$ using the same method as when we join to the vertex labelled $A$ so that red is the dominating color at $x_1$. At the end if we have conflict between vertices $x_1$ and $x_2$, namely $\hat d_r(x_1)=\hat d_r(x_2)$, we recolor symmetrically (we change color of all multiedges) joined at first multicycle to the vertex $x_2$. 

Now, we present the method of joining elements to the pair $T'$ in the second form. When we join to only one vertex from pair $T'$,
we choose the vertex $x_2$ and consider two situations. In the first situation we join only multitree using the general method of joining multitree to the vertex $x_2$. In the second situation when we join multicycle or multicycles and possibly multitree we use the method of joining to the vertex $x_2$ form pair $T'$ in the first form. When we join something to both vertices from pair $T'$ in the second form we start with recoloring multiedge $x_1x_2$ red. Thus, pair $T'$ get the first form and we use the above method of joining to pair $T'$ in the first form. Thus, we are done with pair $T'$.

\textbf{Pair S'.}\quad This pair occurs only on multicycle $^2C_3$ and $^2C_5$. Additionally, if the pair $S'$ occurs on a multicycle $^2C_5$ joined to the colored part of multicactus at vertex $x$,
then neighbors of the vertex $x$ on $^2C_5$ have no spike and the distance between $x$ and $x_1$ is the same as the distance between $x$ and $x_2$. By the above properties of pair $S'$ and the fact that we start considering a colored $^2C_5$ which contain a pair $S'$ with this pair we can choose where we join (to $x_1$ or to $x_2$) the first and the second part of elements to this pair $S'$.

When we join to only one vertex from pair $S'$, we choose vertex $x_2$ and we use the method of joining to the vertex labelled $B$, so that red is the dominating color at $x_2$. Then we can also join rooted multitree to the vertex $x_1$ using the same method as when we join to the vertex labelled $S_2$ so that blue is the dominating color at $x_1$. 

We consider other case when we join at least one multicycle to each vertex from pair $S'$. First, we join all from the first part of joins to this pair $S'$ to the vertex $x_2$ using the method of joining to the vertex labelled $B$, so that red is the dominating color at $x_2$. Then we join exactly one multicycle to $x_1$ using the method of joining to the vertex labelled $S_2$, so that $\hat d_r(x_1)=3$ and $\hat d_b(x_2)=5$. Next, we join multicycles and possibly multitree to the vertex $x_1$ using the same method as when we join to the vertex labelled $B$ so that blue is the dominating color at $x_1$. At the end if we have conflict between vertices $x_1$ and $x_2$, namely $\hat d_r(x_1)=\hat d_r(x_2)$, we recolor symmetrically (we change color of all multiedges) joined at first multicycle to the vertex $x_1$. Thus, we are done with pair $S'$.

\textbf{Pair S.}\quad We consider two cases. \\
\textbf{Case 1:}\quad we join only one multicycle $^2C_3$ or $^2C_{4k}$ or $^2C_{4k+1}$ to the special pair $S$. When we join $^2C_3$ we use one of the following colorings presented in Figure \ref{Pair S}.
\begin{figure}[h!]
\centering
\includegraphics[width=10cm]{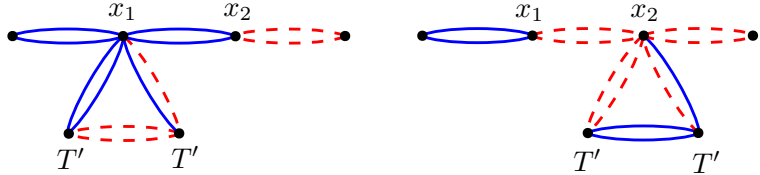}
\caption{Colorings of multicycle $^2C_3$ joined to the special pair $S$.}
\label{Pair S} 
\end{figure}

When we join multicycle $^2C_{4k}$ or $^2C_{4k+1}$ to the vertex $x_1$ we first recolor multiedge $x_1x_2$ blue and then we join this multicycle using the method of joining to the vertex labelled $S_2$. When we join multicycle $^2C_{4k}$ or $^2C_{4k+1}$ to the vertex $x_2$ we first recolor multiedge $x_1x_2$ red and then we join this multicycle using the method of joining to the vertex labelled $S_2$. \\
\textbf{Case 2:}\quad otherwise, we join rooted multitree and all kinds of multicycles to the vertex $x_1$ using the method of joining to the vertex labelled $B$ so that blue is the dominating color at $x_1$. Then we join rooted multitree and all kinds of  multicycles to the vertex $x_2$ using the method of joining to the vertex labelled $B$ so that red is the dominating color at $x_2$. Note that red multiedges at $x_1$ come from multicycles $^2C_{4k}$, $^2C_{4k+1}$ and $^2C_3$, similarly, blue multiedges at the vertex $x_2$ come from multicycles $^2C_{4k}$, $^2C_{4k+1}$ and $^2C_3$.

Obviously, we can have conflict between vertices $x_1$ and $x_2$. First, we consider conflict when one vertex from pair $S$ suppose $x_1$ admits $\hat d(x_1)=4$. It means that $\hat d_b(x_1)=\hat d_b(x_2)=3$. Therefore, it suffices that we recolor symmetrically one of the multicycles $^2C_{4k+3}$, $^2C_{4k+2}$ or rooted multitree joined to the vertex $x_2$ to solve this conflict. If we have similar conflict and $x_2$ admits $\hat d(x_2)=4$ we solve it analogously. Assume that $\hat d(x_1)>4$, $\hat d(x_2)>4$ and we have conflict between $x_1$ and $x_2$. Thus, we have definitely joined at least two multicycles from the set of all multicycles $^2C_{4k+1}$, $^2C_{4k}$ and $^2C_3$ to the vertex $x_1$ or $x_2$. To solve this conflict we recolor using the coloring from the method of joining to the vertex labelled $S_2$ and so that we increase the degree in the dominating color exactly two multicycles from the set of all multicycles $^2C_{4k+1}$, $^2C_{4k}$ and $^2C_3$ joined to one vertex from pair $S$. Thus, we always solve this conflict, because using above procedure we increase the degree in the dominating color by two and decrease the degree in the other color by two in one vertex from pair $S$. Thus, we are done with pair $S$.\\
\textbf{Step 3:\quad joining multicycles and possibly multitree to a leaf in the colored part of $\mathbf{^2G}$.} Note that we never join a single multitree to a leaf in $^2G$. We denote by $x$ chosen leaf in colored part of $^2G$ and by $x_0$ the only neighbour of $x$ in colored part of $^2G$. We present the method of joining the longest multicycle to the leaf. After joining this multicycle vertex $x$ gets label $A$ or $B$ and has different parity than $x_0$. Therefore, we can easily join remaining elements to $x$. We will consider four main cases in view of the color of multiedge $xx_0$ and parity of $x_0$ degrees in both colors.  \\
\textbf{Case 1:}\quad multiedge $xx_0$ is monochromatic, without loss of generality red, and $x_0$ is even. First, we describe the method of joining $^2C_3$ to the vertex $x$. When we join $^2C_3$ and something else to $x$ we use the first coloring of $^2C_3$ presented in Figure \ref{Step3a}. Note that if we join also the only multitree to $x$ we should tend to make blue the dominating color at $x$ to avoid potential conflict. In other case when we join only $^2C_3$ to $x$ we use the second coloring of $^2C_3$ presented in Figure \ref{Step3a}.
\begin{figure}[h!]
\centering
\includegraphics[width=5cm]{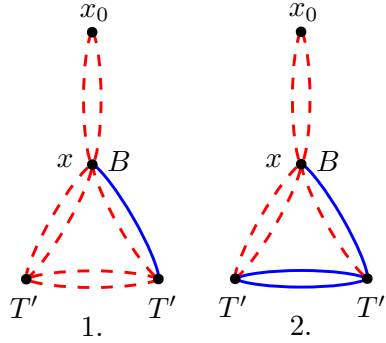}
\caption{Colorings of multicycle $^2C_3$ joined to the vertex $x$ in Case 1.}
\label{Step3a} 
\end{figure}

We join multicycle of length greater than three to the vertex $x$ using the method of joining to the vertex labelled $S_2$. Thus, in all this case the vertex $x$ with joined multicycle has label $B$. \\
\textbf{Case 2:}\quad multiedge $xx_0$ is monochromatic, without loss of generality red, and $x_0$ is odd. We join multicycle to the vertex $x$ using the method of joining to the vertex labelled $A$ so that red is the dominating color at $x$. Thus, the vertex $x$ with joined multicycle has label $A$.\\
\textbf{Case 3:}\quad multiedge $xx_0$ is colored red-blue and $x_0$ is even. We join multicycle to the vertex $x$ using the method of joining to the vertex labelled $B$ so that red is the dominating color at $x$. Thus, the vertex $x$ with joined multicycle has label $B$.\\
\textbf{Case 4:}\quad multiedge $xx_0$ is colored red-blue and $x_0$ is odd. We join a multicycle to the vertex $x$ using the coloring presented in Figure \ref{Step3d}. The coloring of longer multicycle we obtain by adding multipath of length divisible by four colored standardly (first two multiedges blue, next two multiedges red, and so on) to the appropriately colored multicycle of length from four to seven after two red multiedges.
\begin{figure}[h!]
\centering
\includegraphics[width=\textwidth]{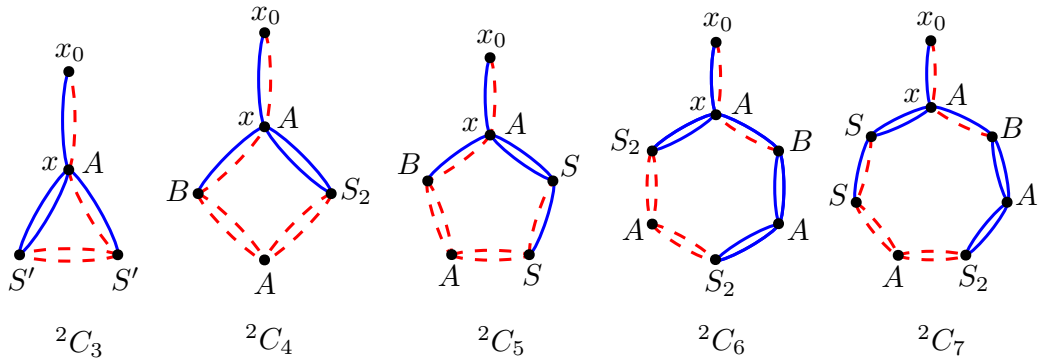}
\caption{The coloring of multicycle joined to the vertex $x$ in Case 4.}
\label{Step3d} 
\end{figure}
Thus, the vertex $x$ with joined multicycle has label $A$. So we are done in this step.
\end{proof}

As an immediate consequence of the above theorem and Theorem \ref{cycle} we get the following result.

\begin{cor}
The Local Irregularity Conjecture for $2$-multigraphs holds for graphs from the family $\mathfrak{T'}$.
\end{cor}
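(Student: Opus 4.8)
The plan is to derive the corollary as an immediate specialization of Theorem~\ref{cactus} (together with Theorem~\ref{cycle}), the only nontrivial ingredient being the structural observation, already hinted at in the introduction, that every graph belonging to $\mathfrak{T'}$ is a cactus. So the argument has two parts: a short inductive proof that every member of $\mathfrak{T'}$ is a cactus, and then an appeal to the results proved above.

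For the structural part I would use the decomposition $\mathfrak{T'}=\mathfrak{T}\cup\{\text{odd paths}\}\cup\{\text{odd cycles}\}$. An odd path is a tree and hence contains no cycle at all, so it is vacuously a cactus; an odd cycle has exactly one cycle and is therefore a cactus. For the family $\mathfrak{T}$ I would induct on its recursive construction: the base graph $K_3$ is a cactus, and the inductive step produces $G'$ from a cactus $G\in\mathfrak{T}$ either by identifying a degree-$2$ vertex $v$ on a triangle with an end of a path of even length---which merely appends a pendant path and creates no new cycle---or by identifying $v$ with an end of a path whose other end is identified with a vertex of a brand-new triangle---which adds exactly one new cycle, the new triangle, joined to the remainder of $G'$ only through cut vertices and a cycle-free path. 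In both cases the cycles of $G'$ are the cycles of $G$ together with at most one new triangle, and no two of them share more than one vertex, so $G'$ is again a cactus. Hence $\mathfrak{T}$ consists of cacti, and in fact every graph of $\mathfrak{T}$ contains a cycle (indeed a triangle).

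With this in hand the conclusion follows by a trivial case split. The acyclic members of $\mathfrak{T'}$ are exactly the odd paths, for which Theorem~\ref{cycle} already gives ${\rm lir}(^2G)\le 2$; every remaining member of $\mathfrak{T'}$ is a cactus containing a cycle, so Theorem~\ref{cactus} gives ${\rm lir}(^2G)\le 2$. (The graph $K_2=P_2$ lies in $\mathfrak{T'}$ as an odd path, but is in any case the one graph explicitly excluded from the Local Irregularity Conjecture for $2$-multigraphs, $^2K_2$ not even being locally irregular colorable.) I do not expect any genuine obstacle: the corollary is a pure specialization of results established above, and the single step that needs even a line of justification---that attaching a triangle to the rest of the graph through a path whose interior contains no cycle cannot destroy the cactus property---is essentially self-evident.
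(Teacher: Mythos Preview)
Your proposal is correct and follows exactly the approach the paper takes: the paper presents the corollary with no proof at all, merely stating it ``as an immediate consequence of the above theorem and Theorem~\ref{cycle}'', and you have supplied precisely the missing justification (that every graph in $\mathfrak{T'}$ is a cactus, with odd paths handled separately by Theorem~\ref{cycle}). Your inductive verification that $\mathfrak{T}$ consists of cacti and your treatment of the $K_2$ exception are both sound and more explicit than anything in the paper.
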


\end{document}